\documentclass[a4paper,twoside]{article}
\usepackage{a4}
\usepackage{amssymb}
\usepackage{amsmath}
\usepackage[all]{xy}
\usepackage[active]{srcltx}
\usepackage[pagebackref,colorlinks,citecolor=blue,linkcolor=blue,urlcolor=blue]{hyperref}
\usepackage[dvipsnames]{color}
\usepackage{upref}
\allowdisplaybreaks[2] 
\newcount\minutes \newcount\hours
\hours=\time \divide\hours 60 \minutes=\hours
 \multiply\minutes-60
\advance\minutes \time
\newcommand{\klockan}{\the\hours:{\ifnum\minutes<10 0\fi}\the\minutes}
\newcommand{\tid}{\today\ \klockan}
\newcommand{\prtid}{\smash{\raise 10mm \hbox{\LaTeX ed \tid}}}
\renewcommand{\prtid}{}
\makeatletter  \headheight 10pt
\def\sectionmark#1{} 
\def\subsectionmark#1{}
\newcommand{\sectnr}{\ifnum \c@secnumdepth >\z@
                 \thesection.\hskip 1em\relax \fi}
\def\@evenhead{\footnotesize\rm\thepage\hfil\leftmark\hfil\llap{\prtid}}
\def\@oddhead{\footnotesize\rm\rlap{\prtid}\hfil\rightmark\hfil\thepage}
\def\tableofcontents{\section*{Contents} 
 \@starttoc{toc}}
\makeatother
\makeatletter
\def\@biblabel#1{#1.}
\makeatother
\makeatletter
\let\Thebibliography=\thebibliography
\renewcommand{\thebibliography}[1]{\def\@mkboth##1##2{}\Thebibliography{#1}
\addcontentsline{toc}{section}{References}
\frenchspacing 
\setlength{\@topsep}{0pt}
\setlength{\itemsep}{0pt}%
\setlength{\parskip}{0pt plus 2pt}%
} \makeatother
\makeatletter
\def\mdots@{\mathinner.\nonscript\!.%
 \ifx\next,.\else\ifx\next;.\else\ifx\next..\else
 \nonscript\!\mathinner.\fi\fi\fi}
\let\ldots\mdots@
\let\cdots\mdots@
\let\dotso\mdots@
\let\dotsb\mdots@
\let\dotsm\mdots@
\let\dotsc\mdots@
\def\vdots{\vbox{\baselineskip2.8\p@ \lineskiplimit\z@
    \kern6\p@\hbox{.}\hbox{.}\hbox{.}\kern3\p@}}
\def\ddots{\mathinner{\mkern1mu\raise8.6\p@\vbox{\kern7\p@\hbox{.}}%
    \raise5.8\p@\hbox{.}\raise3\p@\hbox{.}\mkern1mu}}
\makeatother
\makeatletter
\let\Enumerate=\enumerate
\renewcommand{\enumerate}{\Enumerate%
\setlength{\itemsep}{0pt}%
\setlength{\parskip}{0pt plus 1pt}%
\renewcommand{\theenumi}{\textup{(\alph{enumi})}}%
\renewcommand{\labelenumi}{\theenumi}%
}
\makeatother
\makeatletter
\def\@seccntformat#1{\csname the#1\endcsname.\quad}
\makeatother
\RequirePackage{ifthen}
\newcommand{\authortitle}[3]{\author{#1}\title{#2}%
   \ifthenelse{\equal{#3}{}}{\markboth{#1}{#2}}{\markboth{#1}{#3}}}
\newcommand{\art}[6]{{\sc #1, \rm #2, \it #3 \bf #4 \rm (#5), \mbox{#6}.}}
\newcommand{\auth}[2]{{#1, #2.}}

\newcommand{\artprep}[3]{{\sc #1, \rm #2, #3.}}
\newcommand{\arttoappear}[3]{{\sc #1, \rm #2, to appear in \it #3}}

\newcommand{\AND}{{\rm and }}
\newcommand{\arXiv}[1]{{\tt \href{https://arxiv.org/abs/#1}{arXiv:#1}}}
\RequirePackage{amsthm}
\newtheoremstyle{descriptive}%
  {\topsep}   
  {\topsep}   
  {\rmfamily} 
  {}          
  {\bfseries} 
  {.}         
  { }         
  {}          
\newtheoremstyle{propositional}%
  {\topsep}   
  {\topsep}   
  {\itshape}  
  {}          
  {\bfseries} 
  {.}         
  { }         
  {}          
\theoremstyle{propositional}
\newtheorem{thm}{Theorem}[section]
\newtheorem{prop}[thm]{Proposition}

\theoremstyle{descriptive}
\newtheorem{deff}[thm]{Definition}
\newtheorem{example}[thm]{Example}
\newtheorem{remark}[thm]{Remark}
\makeatletter
\renewenvironment{proof}[1][\proofname]{\par
  \pushQED{\qed}%
  \normalfont
  \trivlist
  \item[\hskip\labelsep
        \itshape
    #1\@addpunct{.}]\ignorespaces
}{%
  \popQED\endtrivlist\@endpefalse
} \makeatother
\newcommand{\setm}{\setminus}

\newcommand{\Cinfty}{{C_\infty}}

\DeclareMathOperator{\Lip}{Lip}

\newcommand*{\coloneq}{:=}

\newcommand{\bdry}{\partial}
\newcommand{\bdy}{\bdry}
{\catcode`p =12 \catcode`t =12 \gdef\eeaa#1pt{#1}}      
\def\accentadjtext#1{\setbox0\hbox{$#1$}\kern   
                \expandafter\eeaa\the\fontdimen1\textfont1 \ht0 }
\def\accentadjscript#1{\setbox0\hbox{$#1$}\kern 
                \expandafter\eeaa\the\fontdimen1\scriptfont1 \ht0 }
\def\accentadjscriptscript#1{\setbox0\hbox{$#1$}\kern   
                \expandafter\eeaa\the\fontdimen1\scriptscriptfont1 \ht0 }
\def\accentadjtextback#1{\setbox0\hbox{$#1$}\kern       
                -\expandafter\eeaa\the\fontdimen1\textfont1 \ht0 }
\def\accentadjscriptback#1{\setbox0\hbox{$#1$}\kern     
                -\expandafter\eeaa\the\fontdimen1\scriptfont1 \ht0 }
\def\accentadjscriptscriptback#1{\setbox0\hbox{$#1$}\kern 
                -\expandafter\eeaa\the\fontdimen1\scriptscriptfont1 \ht0 }

\newcommand{\de}{\delta}
\newcommand{\eps}{\varepsilon}

\renewcommand{\phi}{\varphi}
\newcommand{\p}{{$p\mspace{1mu}$}}
\newcommand{\R}{\mathbf{R}}

\newcommand{\Zp}{\mathbf{Z}^+}
\newcommand{\eR}{{\overline{\R}}}

\newcommand{\Ninfty}{N^{1,\infty}}
\newcommand{\NinftyP}{N^{1,\infty}(\P)}

\newcommand{\Ga}{\Gamma}

\renewcommand{\P}{\mathcal{P}}

\numberwithin{equation}{section}
\newcommand{\eqv}{\ensuremath{
\mathchoice{\ \Longleftrightarrow \ }{\Leftrightarrow}
                {\Leftrightarrow}{\Leftrightarrow}}}
\newcommand{\imp}{\ensuremath{
\mathchoice{\ \Longrightarrow \ }{\Rightarrow}
                {\Rightarrow}{\Rightarrow}}}

\newcommand{\negimp}{\ensuremath{\textstyle\kern 0em \not\kern 0em\Rightarrow}}
\newcommand{\Longnegrevimp}{{\textstyle\kern 0.5em \not\kern -0.5em\Longleftarrow}}
\newcommand{\Downnegimp}{{\textstyle\kern 0.28em \not\kern -0.28em\Big\Downarrow}} 

\begin{document}

\authortitle{Anders Bj\"orn and Jana Bj\"orn}
{Quasicontinuity of $\Ninfty$ functions \\ and the 
Vitali--Carath\'eodory property \\ on general metric spaces}
{Quasicontinuity of $\Ninfty$ functions and the 
Vitali--Carath\'eodory property}
\author{
Anders Bj\"orn \\
\it\small Department of Mathematics, Link\"oping University, SE-581 83 Link\"oping, Sweden\\
\it \small anders.bjorn@liu.se, ORCID\/\textup{:} 0000-0002-9677-8321
\\
\\
Jana Bj\"orn \\
\it\small Department of Mathematics, Link\"oping University, SE-581 83 Link\"oping, Sweden\\
\it \small jana.bjorn@liu.se, ORCID\/\textup{:} 0000-0002-1238-6751
}

\date{Preliminary version, \today}
\date{}

\maketitle

\noindent{\small
{\bf Abstract.} 
This note is a follow up on our recent paper with L.~Mal\'y 
(to appear in \emph{Rev. Mat. Complut.}).
We provide a simple 
example of a compact metric space $\P$ for which 
$L^\infty(\P)$ has the Vitali--Carath\'eodory property,
the Sobolev {$\Cinfty$-}capacity is an outer capacity, but 
the Newtonian space $N^{1,\infty}(\P)$ contains
functions which are not weakly quasicontinuous.
The novelty here is that the Vitali--Carath\'eodory property is satified.
We also obtain some related results about quasicontinuous functions
in $\Ninfty(\P)$ and
a characterization of when $L^\infty(\P)$ has the Vitali--Carath\'eodory property.
}

\medskip

\noindent {\small \emph{Key words and phrases}: 
metric space, 
Newtonian space, 
outer capacity,
quasicontinuity, 
Sobolev capacity, 
upper gradient, 
Vitali--Carath\'eodory property, 
weak quasicontinuity.
}

\medskip

\noindent {\small \emph{Mathematics Subject Classification} (2020): 
Primary: 
46E36, 
Secondary: 
30L99, 
31C15, 
31E05. 
}


\medskip

\noindent {\small \emph{Funding}: 
A.~B. resp.\ J.~B. were supported by the Swedish Research Council,
  grants 2024-04095 resp.\ 2022-04048.
}

\section{Introduction}

This note is a follow up on our paper~\cite{BBLMaly} (joint with L.~Mal\'y).
We refer to that paper for further discussion of the concepts in this note.
We will provide some
rather simple but illuminating observations
concerning capacity and quasicontinuous functions in the Newtonian Sobolev space $\Ninfty(\P)$.

\medskip

\emph{We assume throughout the note that $\P = (\P, d, \mu)$
is a metric measure space equipped with a
positive complete Borel measure 
$\mu$ such that $0 < \mu(B) < \infty$ for every ball $B \subset \P$.
It follows that $\P$ is separable and Lindel\"of.}

\medskip

The  following statements
for the metric measure space $\P$ were studied in~\cite{BBLMaly}
in the setting of Newtonian spaces based on general 
quasi-Banach function lattices\/\textup{:}

\begin{enumerate}
\renewcommand{\theenumi}{\textup{(\Alph{enumi})}}%
\item \label{b-qcont}
Every $u \in \Ninfty(\P)$ is quasicontinuous.
\item \label{b-outer}
$\Cinfty$ is an outer capacity.
\stepcounter{enumi}
\stepcounter{enumi}
\stepcounter{enumi}
\item \label{b-repr}
Every $u \in \NinftyP$ has a quasicontinuous representative $v$, i.e.\
\[
\Cinfty(\{x: v(x)\ne u(x)\})=0.
\]
\item \label{b-wqcont}
Every $u \in \NinftyP$ is weakly quasicontinuous.
\end{enumerate}

See Section~\ref{sect-def} for the definitions.
Labels \ref{b-qcont}--\ref{b-wqcont} are as in~\cite{BBLMaly},
but we have omitted some statements 
that for $L^\infty$ 
are equivalent to \ref{b-outer}, by~\cite[Theorem~1.4]{BBLMaly}.
Moreover, \cite[Proposition~6.1]{BBLMaly}
(see Proposition~\ref{prop-Linfty-gen}) shows that
every quasicontinuous function in $\Ninfty(\P)$
is in fact continuous.
This is, however, not the case for weakly quasicontinuous functions.

In particular, it was shown in~\cite[Theorems~1.2 and~1.4]{BBLMaly} that 
\ref{b-qcont} implies the other properties, but not vice versa.
The following 
is a special case of results in~\cite{BBLMaly}.

\begin{thm} \label{thm-VC-new-intro}
\textup{(\cite[Theorems~1.4 and~1.5]{BBLMaly})}
If
$\P$ is locally compact and $L^\infty(\P)$ has the Vitali--Carath\'eodory
property, then
$\Cinfty$ is an outer capacity, i.e.\ \ref{b-outer} holds.
\end{thm}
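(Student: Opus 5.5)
We have to show: if $\P$ is locally compact and $L^\infty(\P)$ has the Vitali--Carath\'eodory property, then for every $E\subset\P$ and $\eps>0$ there is an open $G\supset E$ with $\Cinfty(G)<\Cinfty(E)+\eps$. We may assume $\Cinfty(E)<\infty$.

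\emph{Step 1: a test function.} By the definition of $\Cinfty$, pick $u\in\Ninfty(\P)$ with $u\ge1$ on $E$ and $\|u\|_{\Ninfty(\P)}<\Cinfty(E)+\eps$. Pick also an upper gradient $g$ of $u$ with $\|g\|_{L^\infty}$ almost optimal. Replacing $u$ by $\min\{u^+,1\}$ only decreases the norm, so we may assume $0\le u\le 1$.

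\emph{Step 2: use Vitali--Carath\'eodory on $g$.} The Vitali--Carath\'eodory property for $L^\infty$ gives a lower semicontinuous $\rho\ge g$ with $\|\rho\|_{L^\infty}\le\|g\|_{L^\infty}+\eps$. A lower semicontinuous function that dominates an upper gradient is still an upper gradient of $u$, so $u$ now has a lower semicontinuous upper gradient. We also want to replace $u$ by a lower semicontinuous function $v\ge u$ on $E$ with $\|v\|_{L^\infty}$ close to $\|u\|_{L^\infty}$. Applying the Vitali--Carath\'eodory property to $u$ directly does not obviously preserve the upper gradient. So instead I would use an ``inf over curves'' regularization:
\[
v(x)=\inf_\gamma\Bigl(u(\gamma_{\text{end}})+\int_\gamma\rho\,ds\Bigr),
\]
or better a lower semicontinuous majorant $w$ of $u$ from the Vitali--Carath\'eodory property, combined with
\[
v(x)=\inf\Bigl\{w(y)+\int_\gamma\rho\,ds:\gamma \text{ from } x \text{ to } y\Bigr\}.
\]
Then $\rho$ is an upper gradient of $v$, $v\le w$ (take the constant curve), and since $u$ has upper gradient $\rho$ and $w\ge u$, we get $v\ge u$ everywhere, in particular outside exceptional curve families. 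Hence $\|v\|_{\Ninfty}\le\|u\|_{\Ninfty}+2\eps$.

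\emph{Step 3: openness.} Set $G=\{v>1-\eps\}$. Then $G\supset E$, and $v/(1-\eps)$ is admissible for $G$, giving $\Cinfty(G)\le(\Cinfty(E)+3\eps)/(1-\eps)$. Letting $\eps\to0$ yields \ref{b-outer}, provided $G$ is open, i.e.\ $v$ is lower semicontinuous.

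\emph{Main obstacle.} Lower semicontinuity of $v$ is the hardest step. Here local compactness enters: near-minimizing curves from points $x_k\to x$ must be converted into a competitor at $x$. I would handle this with an Arzel\`a--Ascoli limit of the curves in compact neighbourhoods, combined with lower semicontinuity of $\gamma\mapsto\int_\gamma\rho\,ds$ for lower semicontinuous $\rho$ and lower semicontinuity of $w$. Curves of unbounded length need truncation: since $w\ge0$ and the total cost is bounded by $\|w\|_\infty$, only curves of length roughly $\|w\|_\infty/\inf\rho$ matter. This requires adding a small constant to $\rho$ so that $\rho\ge\eps'>0$. Lengths are then bounded, the curves stay in a compact set after localization, and the argument closes. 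Measurability issues (an $L^\infty$ norm versus pointwise bounds on sets of measure zero) would be handled by noting that the Vitali--Carath\'eodory majorants can be taken pointwise everywhere and that $\Ninfty$ functions agree off capacity-zero sets.
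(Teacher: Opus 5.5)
\textbf{There is a genuine gap: the lower semicontinuity of $v$.} Your Steps 1--3 are a sound reduction. Everything then hinges on $v$ being lower semicontinuous, and the argument you sketch for this does not work under mere local compactness.

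Bounding lengths by $1/\eps'$ does not keep near-minimizing curves from $x_k\to x$ inside a compact neighbourhood of $x$. They may leave it, and passing to the limit at the exit points would need lower semicontinuity of $v$ at those points, which is circular.

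In fact $v$, as you define it, need not be lower semicontinuous. Take
\[
\P=(\{0\}\times(-1,0])\cup\bigcup_{k\ge1}(\{1/k\}\times[-1,0])\subset\R^2,
\]
with length measure weighted by $2^{-k}$ on the $k$th segment. Let $\rho\equiv\eps'$, and let $w\equiv1$ except $w(1/k,-1)=0$.
\begin{itemize}
\item This $\P$ is locally compact.
\item $w$ is lower semicontinuous, since $(1/k,-1)\to(0,-1)\notin\P$.
\item $v(1/k,0)\le\eps'$.
\item Each segment $\{1/k\}\times[-1,0]$ is clopen, so curves from $(0,0)$ stay in $\{0\}\times(-1,0]$, and hence $v(0,0)=1$.
\end{itemize}
Your Arzel\`a--Ascoli argument works in proper spaces. ``Localization'' in a locally compact space would need a genuinely different construction of $v$, which you do not give.

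\textbf{The missing idea.} The hypothesis is far stronger than you use. Apply the Vitali--Carath\'eodory property to $\chi_{\{x\}}$. If $\mu(\{x\})=0$, then $\|\chi_{\{x\}}\|_{L^\infty(\P)}=0$. But any lower semicontinuous majorant exceeds $\tfrac12$ on an open neighbourhood of $x$, which has positive measure. Hence $\mu(\{x\})>0$ for every $x$.

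Consequently, every $u\ge1$ on a nonempty $E$ has $\|u\|_{L^\infty(\P)}\ge1$. So
\[
\Cinfty(E)\ge1=\|1\|_{\Ninfty(\P)}\ge\Cinfty(\P)\ge\Cinfty(E),
\]
and $G=\P$ works; for $E=\emptyset$ take $G=\emptyset$.

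This is essentially the paper's route. It uses Proposition~\ref{prop-VC} to get that $\P$ is countable, hence has no nonconstant curves. It then applies Proposition~\ref{prop-VC-no-curve} to get $\Cinfty(E)=\|\chi_E\|_{L^\infty(\P)}=1$ for nonempty $E$. As Theorem~\ref{thm-main} shows, local compactness is superfluous.

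Since $\P$ has no nonconstant curves, your $v$ actually coincides with $w$ and is lower semicontinuous after all. But that holds for this reason, not the one you give.
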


We can now improve upon this result in the following way,
cf.\ Theorem~\ref{thm-qcont-char-Linfty} for 
implications not  requiring 
the Vitali--Carath\'eodory property.

\begin{thm} \label{thm-main}
Assume that 
$L^\infty(\P)$ has the Vitali--Carath\'eodory property.
Then 
$\Cinfty$ is an outer capacity, i.e.\ \ref{b-outer} holds.

Moreover, \ref{b-qcont}\eqv\ref{b-repr}\eqv\ref{b-wqcont},
and there are examples when these properties all fail,  and other
examples when they all hold.
\end{thm}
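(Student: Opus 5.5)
We first show that $\Cinfty$ is an outer capacity without local compactness. Let $E\subset\P$ with $\Cinfty(E)<\infty$ and $\eps>0$. By the definition of $\Cinfty$ we pick $u\in\NinftyP$ with $u\ge1$ on $E$, $0\le u\le 1$, and an upper gradient $g$ of $u$ such that $\|u\|_{L^\infty}+\|g\|_{L^\infty}<\Cinfty(E)+\eps$. We apply the Vitali--Carath\'eodory property of $L^\infty(\P)$ to $g$ and get a lower semicontinuous $\tilde g\ge g$ with $\|\tilde g\|_{L^\infty}<\|g\|_{L^\infty}+\eps$. We would also like a lower semicontinuous majorant of $u$ itself, but this is not enough, because upper gradients and the Newtonian norm are defined pointwise up to exceptional curve families.

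The key step is therefore to build from $u$ and $\tilde g$ a function $v\ge u$ with $v\ge 1-\eps$ on an open set $G\supset E$. Our plan is to define
\[
v(x)=\inf_{\gamma}\Bigl(u(\gamma(0))+\int_\gamma \tilde g\,ds\Bigr),
\]
or rather a version of it with an $L^\infty$-small lower semicontinuous majorant of $u$ added. Here the infimum is over curves ending at $x$. Alternatively we apply VC to $u$ and use that $\tilde g$ is an upper gradient of the resulting lower semicontinuous majorant. We then show that $\{x: v>1-\eps\}$ is open because $v$ is lower semicontinuous. Lower semicontinuity of path-integral infima of lower semicontinuous integrands is exactly where local compactness was used in \cite{BBLMaly}, through Arzel\`a--Ascoli. \textbf{This is the main obstacle.} Our first approach would avoid curves altogether: we apply the VC property directly to $u$ to get a lower semicontinuous $w\ge u$ with $\|w\|_\infty\le\|u\|_\infty+\eps$. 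We then take $G=\{w>1-\eps\}$, which is open and contains $E$, and take $(1-\eps)^{-1}\min\{u+\eps\chi_G,1\}$-type test functions. For these we need to control the upper gradient of a jump across $\bdry G$ only in $L^\infty$ norm. Since the $\Ninfty$ norm is an $L^\infty$ quantity, we expect that the cost of this jump can be absorbed using $\tilde g$ and a truncation. This needs to be checked carefully.

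For the equivalences we use Theorem~\ref{thm-qcont-char-Linfty} and the results of \cite{BBLMaly} quoted in the introduction. \ref{b-qcont}$\imp$\ref{b-repr}$\imp$\ref{b-wqcont} hold in general, since quasicontinuity passes to the representative and quasicontinuous implies weakly quasicontinuous. For \ref{b-wqcont}$\imp$\ref{b-qcont} we use that $\Cinfty$ is now outer. Weak quasicontinuity gives, for every $\eps$, a set $E$ with $\Cinfty(E)<\eps$ off which $u$ is continuous, and outer regularity enlarges $E$ to an open set of small capacity. This gives quasicontinuity. (This is presumably the implication in Theorem~\ref{thm-qcont-char-Linfty} that assumes \ref{b-outer}.)

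For the examples: $\P=\R^n$ (or any locally compact space with the VC property where $\Ninfty$ functions are Lipschitz-continuous representatives) gives all properties holding. For failure we take the compact space announced in the abstract. One candidate is a compact set with no rectifiable curves except on a small part, e.g.\ a Cantor-like set joined by segments. Here $\Ninfty=L^\infty$ on the curveless part, so discontinuous functions exist, while VC holds by regularity of $\mu$ and compactness. We then verify the failure of \ref{b-wqcont} via a function that is discontinuous on a set of positive capacity, recalling that quasicontinuous $\Ninfty$ functions are continuous by Proposition~\ref{prop-Linfty-gen}.
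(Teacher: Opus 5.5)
\paragraph{Gap: outer capacity.} You never actually prove \ref{b-outer}. The path-integral construction is abandoned at the lower semicontinuity step, and absorbing the jump of a test function across $\bdry G$ is left ``to be checked''. In spaces with many curves such a jump cannot be paid for by a bounded upper gradient, so this plan can only work for a special reason, and that reason is the idea you are missing: the Vitali--Carath\'eodory property for $L^\infty$ is extremely restrictive.

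Apply it to $u=\chi_{\{x\}}$. If $\mu(\{x\})=0$, then $\|u\|_{L^\infty(\P)}=0$. But every lower semicontinuous $v\ge u$ is $>\tfrac12$ on an open neighbourhood of $x$, which has positive measure, so $\|v\|_{L^\infty(\P)}\ge\tfrac12$. Hence every point is an atom (Proposition~\ref{prop-VC}).

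It follows that $\P$ is countable, since $\P=\bigcup_{n}B(x_0,n)$ and each ball has finite measure. So $\P$ contains no nonconstant rectifiable curves: their images are connected sets with more than one point, hence uncountable. Thus $g\equiv0$ is an upper gradient of every function, and $\Cinfty(E)=\|\chi_E\|_{L^\infty(\P)}=1$ for every nonempty $E$. Now \ref{b-outer} is trivial, because $\Cinfty(E)=\Cinfty(\P)$ with $\P$ open. No curve constructions and no local compactness are needed.

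Once \ref{b-outer} is known, your proof of \ref{b-wqcont}$\imp$\ref{b-qcont}, by enlarging the exceptional set to an open set, is correct; it holds for any outer capacity. The paper argues more directly: an exceptional set of capacity $<1$ must be empty, so weakly quasicontinuous functions are in fact continuous.

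\paragraph{Gap: the examples.} Your examples are invalid, because they must satisfy the theorem's Vitali--Carath\'eodory hypothesis. $\R^n$ with Lebesgue measure fails it (again use $\chi_{\{0\}}$). So does any space containing a segment or an uncountable Cantor set. For $L^\infty$ the property does not follow from regularity of $\mu$ and compactness: a single point of zero measure already destroys it.

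The paper's failure example is $\P=\{0,2^{-n}:n\in\Zp\}$ with $\mu=\de_0+\sum_{n=1}^\infty 2^{-n}\de_{2^{-n}}$. There $\chi_{\{0\}}\in\NinftyP$ is discontinuous at $0$. Since only the empty set has capacity $<1$, it is not weakly quasicontinuous, so \ref{b-qcont}, \ref{b-repr} and \ref{b-wqcont} all fail. For the positive case, take a finite (or discrete) space: each singleton is a ball and hence an atom, and every function is continuous.
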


In particular, we will see in Example~\ref{ex-2^{-n}-VC} that
the quasicontinuity properties \ref{b-qcont}, \ref{b-repr} and~\ref{b-wqcont}
are not implied by the Vitali--Carath\'eodory property.
Thus, in some sense,
Theorem~\ref{thm-VC-new-intro} (and \cite[Theorem~1.5]{BBLMaly})
is  sharp.
Unlike all the counterexamples in~\cite{BBLMaly}, 
this example has the Vitali--Carath\'eodory property.

The Vitali--Carath\'eodory property for $L^\infty(\P)$
can be characterized in the following way.

\begin{prop}  \label{prop-VC}
The Vitali--Carath\'eodory property for $L^\infty(\P)$ holds if and only if
$\mu(\{x\})>0$ for every $x\in\P$.

In particular, $\P$ has to be finite or countable.
\end{prop}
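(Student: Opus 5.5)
We need the definition of the Vitali--Carath\'eodory property for $L^\infty(\P)$. The standard one (as in \cite{BBLMaly}) is: for every $f\in L^\infty(\P)$ and every $\eps>0$ there is a lower semicontinuous $g$ with $g\ge f$ everywhere and $\|g-f\|_{L^\infty}<\eps$. Equivalently, it suffices to treat nonnegative $f$ and ask for a lower semicontinuous majorant that agrees with $f$ up to $\eps$ a.e.

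\emph{Sufficiency.} Assume $\mu(\{x\})>0$ for every $x\in\P$. First we show that $\P$ is countable. Since $\P$ is Lindel\"of and every ball has finite measure, $\P$ is a countable union of balls of finite measure. A set of finite measure can contain only countably many atoms $x$ with $\mu(\{x\})>0$, because there are finitely many with mass $>1/k$ for each $k$. Hence $\P$ is countable.

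Consequently every subset of $\P$ is measurable, and the only $\mu$-null set is the empty set. So the $L^\infty$ norm is the supremum norm and ``a.e.'' means ``everywhere''. Given $f\in L^\infty(\P)$, we may simply take $g=f$. The remaining issue is that $g$ must be lower semicontinuous, and $f$ need not be, e.g.\ if $\P$ has an accumulation point.

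The fix is to build a lower semicontinuous majorant directly. Enumerate $\P=\{x_j\}$ and set
\[
g=\sup_j \bigl(f(x_j)\chi_{\{x_j\}} + M\chi_{\P\setm\{x_j\}}\bigr)?
\]
This guess does not work. Instead, define
\[
g(x)=f(x)+\eps\,\chi_{\P\setm\{x\}}\text{-type}
\]
corrections. The clean choice is $g=\sup_j h_j$, where $h_j=f(x_j)$ on an open ball $B_j\ni x_j$ and $h_j=-\|f\|_\infty$ elsewhere, with the radii chosen so that $f\le f(x_j)+\eps$ on $B_j$. This is not possible for general $f$, so this route fails.

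A better approach is to use the definition in \cite{BBLMaly}, which I expect is stated for the norm: $g\ge f$ lower semicontinuous with $\|g-f\|<\eps$, with only a.e.\ information mattering. I will check that definition. The alternative is to use that $L^\infty$-VC is equivalent to: for every measurable $E$ and every $\eps>0$ there is an open $G\supset E$ with $\|\chi_G-\chi_E\|_{L^\infty}<\eps$, i.e.\ $\mu(G\setm E)=0$. Under the atom hypothesis this forces $G=E$, so every set must be open, i.e.\ $\P$ must be discrete.

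This suggests the correct route is through the ``a.e.'' formulation: $g\ge f$ a.e., $g$ lower semicontinuous. Then $g=f$ works whenever every function is lower semicontinuous, which holds on a discrete space. So I expect that the atom condition, combined with $0<\mu(B)<\infty$, yields discreteness. Suppose instead $x_k\to x$ with distinct $x_k$. The ball $B(x,1)$ contains infinitely many atoms, and its finite measure gives $\mu(\{x_k\})\to0$. That alone is no contradiction, so discreteness does not follow, and the formulation must be handled by the step-function reduction below.

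\emph{Plan I will actually follow.} By the standard reduction (as in \cite{BBLMaly}), the VC property for $L^\infty$ is equivalent to: for every measurable $E$ there is an open $G\supset E$ with $\mu(G\setm E)=0$. This reduction goes through approximation of $f$ by simple functions, using that finite sums and maxima of lower semicontinuous functions are lower semicontinuous.

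\emph{Necessity.} If some point $x$ has $\mu(\{x\})=0$, take $E=\{x\}$. Any open $G\ni x$ contains a ball, which has positive measure, so $\mu(G\setm E)>0$. Hence the VC property fails.

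\emph{Sufficiency.} If every point is an atom, then $\mu(G\setm E)=0$ forces $G=E$, so $E$ itself must be open. This shows the reduction must be taken carefully.

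The main obstacle is pinning down the exact VC definition used, namely whether the lower semicontinuous majorant is required everywhere or only a.e., and the corresponding equivalent set formulation. Once that is fixed, the necessity direction via $E=\{x\}$ and the countability argument above are routine.
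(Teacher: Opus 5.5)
Your proposal has a genuine gap: the ``if'' direction is never proved, and under the formulations you adopt it is actually false. Your guessed definition asks for a lower semicontinuous $g\ge f$ with $\|g-f\|_{L^\infty}<\eps$, and your claimed ``standard reduction'' asks for an open $G\supset E$ with $\mu(G\setm E)=0$. When all points are atoms, both force every set to be open, i.e.\ $\P$ discrete, as you observe yourself. But there are non-discrete spaces in which every point is an atom. Take $\P=\{0\}\cup\{2^{-n}:n\in\Zp\}$ with $\mu=\de_0+\sum_n 2^{-n}\de_{2^{-n}}$ (Example~\ref{ex-2^{-n}-VC}). Any lower semicontinuous $g\ge\chi_{\{0\}}$ satisfies $g(2^{-n})>\tfrac12$ for all large $n$, so $\|g-\chi_{\{0\}}\|_{L^\infty}\ge\tfrac12$. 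So the reduction you invoke is not a reformulation of the property used here, and handling it more carefully cannot give sufficiency.

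The missing idea is the actual definition (Definition~\ref{def-VC}). It is the norm identity $\|u\|_{L^\infty(\P)}=\inf\{\|v\|_{L^\infty(\P)}: v\ge|u|,\ v \text{ lower semicontinuous}\}$. The majorant only needs almost the same norm; it need not approximate $u$. With this, sufficiency takes one line. If every point is an atom, the only null set is $\emptyset$, so $\|u\|_{L^\infty(\P)}=\sup_\P|u|$. The constant function $v\equiv\sup_\P|u|$ is then a lower semicontinuous majorant of $|u|$ with the same norm.

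Necessity is your $\{x\}$ idea applied directly to this definition. We have $\|\chi_{\{x\}}\|_{L^\infty(\P)}=0$. Any lower semicontinuous $v\ge\chi_{\{x\}}$ exceeds $\tfrac12$ on an open neighbourhood of $x$, which has positive measure, so $\|v\|_{L^\infty(\P)}\ge\tfrac12$.

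Your countability argument is correct: use the Lindel\"of property, cover $\P$ by countably many balls of finite measure, and note each contains only countably many atoms. This supplies the ``in particular'' part, which the paper leaves implicit.
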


When there are ($L^\infty$-almost) no  nonconstant rectifiable curves
in $\P$,
the following result gives some further equivalent statements
to the Vitali--Carath\'eodory property.
Some statements about (weak) quasicontinuity of functions in $\Ninfty(\P)$
were for such spaces $\P$ given in~\cite[Theorem~6.2]{BBLMaly}.

\begin{prop} \label{prop-VC-no-curve}
Assume that 
there are 
$L^\infty$-almost no nonconstant rectifiable curves
in $\P$.
Then the following are equivalent:
\begin{enumerate}
\item \label{c-VC}
The Vitali--Carath\'eodory property holds for $X=L^\infty$.
\item  \label{c-outer}
$\Cinfty$ is an outer capacity.
\item \label{c-E}
$\Cinfty(E)=1$ for every nonempty set $E$.
\item \label{c-mu}
$\mu(\{x\})>0$ for every $x\in\P$.
\end{enumerate}
\end{prop}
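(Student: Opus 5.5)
We prove the cycle \ref{c-VC}\imp\ref{c-outer}\imp\ref{c-E}\imp\ref{c-mu}\imp\ref{c-VC}. Two of these implications need no work here. \ref{c-VC}\imp\ref{c-outer} is the first part of Theorem~\ref{thm-main}, and \ref{c-mu}\eqv\ref{c-VC} is Proposition~\ref{prop-VC}. Neither of these uses the curve assumption. That assumption enters only through the identification of $\Ninfty(\P)$ and $\Cinfty$.

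\emph{Key preliminary.} If $L^\infty$-almost no nonconstant rectifiable curves exist, then $g\equiv0$ is an $\infty$-weak upper gradient of every function. Hence $\Ninfty(\P)=\{u:\P\to\eR : u \text{ bounded}\}$, with norm $\|u\|_{\Ninfty}=\|u\|_{L^\infty}$ (taking $\|g\|_{L^\infty}=0$), and every pointwise defined bounded function belongs to the space.

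\emph{Capacity.} Consequently, for any $E\subset\P$,
\[
\Cinfty(E)=\inf\{\|u\|_{L^\infty}: u\ge1 \text{ on } E\}.
\]
If $\mu(E)>0$, then every admissible $u$ has $\|u\|_{L^\infty}\ge1$, and $u=\chi_E$ attains this, so $\Cinfty(E)=1$. If $\mu(E)=0$, then $u=\chi_E$ is admissible with $\|u\|_{L^\infty}=0$, so $\Cinfty(E)=0$. Thus $\Cinfty(E)\in\{0,1\}$, and $\Cinfty(E)=0$ exactly when $\mu(E)=0$. I would check that this matches the paper's normalization of $\Cinfty$ (an $\Ninfty$-norm with the $L^\infty$ part and the gradient part summed or maximized); in either case the gradient term vanishes.

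\emph{The implications.} With the capacity identified, the remaining steps are as follows.

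\ref{c-E}\imp\ref{c-mu}: $\Cinfty(\{x\})=1$ gives $\mu(\{x\})>0$ by the dichotomy above.

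\ref{c-mu}\imp\ref{c-E}: any nonempty $E$ contains some point $x$, so $\mu(E)\ge\mu(\{x\})>0$ and hence $\Cinfty(E)=1$.

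\ref{c-outer}\imp\ref{c-mu}: this is the step needing an argument. Suppose $\mu(\{x_0\})=0$. Then $\Cinfty(\{x_0\})=0$, so outer capacity gives an open $G\ni x_0$ with $\Cinfty(G)<1$, hence $\Cinfty(G)=0$ and $\mu(G)=0$. But $G$ contains a ball, and balls have positive measure by the standing assumption, a contradiction.

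The main obstacle is making the preliminary rigorous. One must verify that the family of nonconstant rectifiable curves being $L^\infty$-exceptional makes $0$ an upper gradient outside that family, and hence an $\infty$-weak upper gradient. This is standard: there is a nonnegative Borel $\rho\in L^\infty$ with $\|\rho\|_{L^\infty}$ arbitrarily small and $\int_\gamma\rho\,ds=\infty$ on the exceptional family. Beyond that, the only care needed is with the exact definitions of $\Cinfty$ and of the norm used in the paper.
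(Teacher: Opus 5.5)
Your computation of $\Cinfty$ and the arrows \ref{c-E}\eqv\ref{c-mu}, \ref{c-outer}\imp\ref{c-mu} and \ref{c-mu}\eqv\ref{c-VC} are correct. They amount to what the paper does: it notes $\Cinfty(E)=\|\chi_E\|_{L^\infty(\P)}$ and then uses Propositions~\ref{prop-Linfty-gen} and~\ref{prop-VC}.

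The gap is that your only arrow into \ref{c-outer} is \ref{c-VC}\imp\ref{c-outer}, which you take from Theorem~\ref{thm-main}. In the paper, that part of Theorem~\ref{thm-main} is proved by noting that $\P$ is countable, hence has no nonconstant curves, and then \emph{applying Proposition~\ref{prop-VC-no-curve}}. So your cycle is circular. You also cannot substitute Theorem~\ref{thm-VC-new-intro}, because it assumes local compactness, which can fail here. Take $\P=\mathbf{Q}$ with $\mu=\sum_n 2^{-n}\de_{q_n}$, where $\{q_n\}$ enumerates $\mathbf{Q}$. This space satisfies all the hypotheses and \ref{c-VC}, but it is not locally compact.

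The repair is one line and needs no Vitali--Carath\'eodory input: prove \ref{c-E}\imp\ref{c-outer} directly. For $E=\emptyset$ take $G=\emptyset$. For $E\ne\emptyset$, every open $G\supset E$ is nonempty, so $\Cinfty(G)=1=\Cinfty(E)$ by \ref{c-E}. Then \ref{c-VC}\eqv\ref{c-mu}\eqv\ref{c-E}\imp\ref{c-outer}\imp\ref{c-mu} completes the proof.

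Two minor points. First, with the paper's definition via genuine upper gradients you do not need weak upper gradients. Since the right-hand side of the upper gradient inequality is $\infty$ on every nonconstant rectifiable curve, $\eps\rho$ is itself an upper gradient of every function, which gives $\|u\|_{\NinftyP}=\|u\|_{L^\infty(\P)}$. Second, $\NinftyP$ is not the set of all bounded functions: its elements must be measurable, so $\chi_E$ is admissible only for measurable $E$. This is harmless, since you apply the dichotomy only to singletons and open sets. In \ref{c-mu}\imp\ref{c-E} you can use monotonicity instead: $1=\Cinfty(\{x\})\le\Cinfty(E)\le\|1\|_{\NinftyP}=1$.
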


Another property important for quasicontinuity is the 
density of continuous functions in $\Ninfty(\P)$.
This property will, however, not be considered in this note.
It is easily shown that if $\P$ has $L^\infty$-almost no curves, then
continuous functions are dense in $\Ninfty(\P)=L^\infty(\P)$ if
and only if all points in $\P$ are isolated,
which in turn implies that
all functions in $\Ninfty(\P)$ are continuous.

For locally complete connected metric spaces $\P$,
equipped with a doubling measure,
Durand-Cartagena--Jaramillo--Shan\-mu\-ga\-lin\-gam~\cite[Theorem~3.1]{DCJS16}
showed that $\NinftyP=\Lip^\infty(\P)$, with comparable seminorms,
 if and only if $\P$ supports an $\infty$-Poincar\'e inequality, 
which  in turn is equivalent
 to $\P$ being very thick quasiconvex.
See also  Garc\'\i a-Bravo--Ikonen--Zhu~\cite{GB-I-Z} for similar results with infinitesimally doubling measures.
However, Lipschitz continuity is a stronger requirement than 
continuity (=quasicontinuity), as seen for example by the slit disc.
Note that on nontrivial connected spaces, the Vitali--Carath\'eodory property 
fails,  by Proposition~\ref{prop-VC}.

\section{Definitions and some results from \texorpdfstring{\cite{BBLMaly}}{[1]}}
\label{sect-def}

In~\cite{BBLMaly} we considered Newtonian spaces based
on general quasi-Banach function lattices.
In this note, we limit ourselves to Newtonian spaces 
based on   
$L^\infty$, which is a Banach function space.

A Borel function $g: \P \to [0,\infty]$ 
is an \emph{upper gradient} of $u: \P \to \eR:=[-\infty,\infty]$ if
\begin{equation*} 
 |u(\gamma(0)) - u(\gamma(l_\gamma))| \le \int_\gamma g\,ds
\end{equation*}
for every nonconstant rectifiable curve $\gamma: [0, l_\gamma]\to\P$. 
Here  we  use the convention that
$|(\pm\infty)-(\pm\infty)|=\infty$.

The \emph{Newtonian space} $\NinftyP$ is the space
\begin{equation*}  
  \Ninfty(\P)  =  \Bigl\{u\in L^\infty(\P) :
  \|u\|_{\NinftyP}\coloneq \| u \|_{L^\infty(\P)} + \inf_g \|g\|_{L^\infty(\P)} <\infty \Bigr\},
\end{equation*}
where the infimum is taken over all upper gradients $g$ of $u$.
In this note, it is convenient to
assume that functions in $\NinftyP$
 are defined everywhere (with values in $\eR$),
not just up to an equivalence class.
This is essential e.g.\  for the definition of
upper gradients to make sense.

The \emph{\textup{(}Sobolev\/\textup{)} capacity} of a set
$E\subset \P$ is defined as
\[
\Cinfty(E) = \inf\{ \|u\|_{\NinftyP}: u\ge 1 \mbox{ on }E\}.
\]

The capacity $\Cinfty$ is  an \emph{outer capacity} if
\[
    \Cinfty(E) = \inf_{\substack{G\text{ open} \\G \supset E  }} \Cinfty(G)  
    \quad \text{for every set } E \subset \P.
\]
Several equivalent characterizations
of the outer capacity property were given in~\cite[Theorem~1.4]{BBLMaly}.

\begin{deff}
A function $u: \P \to \eR$ is \emph{weakly quasicontinuous}
if for every $\eps>0$ there is a set $E \subset \P$ with
$\Cinfty(E) < \eps$ such that the restriction $u\vert_{\P \setminus E}$ is
continuous.
If the set
$E$ can be chosen open for every $\eps>0$, then $u$ is
\emph{quasicontinuous}.
\end{deff}

$\P$ has \emph{$L^\infty$-almost no nonconstant rectifiable curves} 
if there is a Borel function $\rho \in L^\infty(\P)$
 such that $\int_\gamma \rho\,ds = \infty$ for every 
nonconstant rectifiable curve $\gamma \in \Ga$.

The following result was one of the main results in~\cite{BBLMaly}.

\begin{thm} \label{thm-qcont-char-Linfty} 
\textup{(\cite[Theorem~1.4]{BBLMaly})}
The following implications hold under our general assumptions\/\textup:
\begin{equation*}
\xymatrix{
\ref{b-qcont} \ar@{=>}[r]^{\Longnegrevimp}
\ar@{=>}[d]
& \ref{b-outer} 
\ar@{<=}[d]^{\textstyle\text{\rlap{\kern -0.65em $\not$}} \Downnegimp}
\\
 \ref{b-repr} \ar@{=>}[r]^{\Longnegrevimp} & 
\ref{b-wqcont}\rlap{\textup{.}} 
}
\end{equation*}
Moreover, there are examples when all of these properties fail.
\end{thm}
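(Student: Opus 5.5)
The plan is to prove the four positive implications directly from the definitions, and then to give explicit spaces for the non-implications and for the simultaneous failure of all properties. I read the diagram as asserting \ref{b-qcont}\imp\ref{b-outer}, \ref{b-qcont}\imp\ref{b-repr}, \ref{b-repr}\imp\ref{b-wqcont} and \ref{b-wqcont}\imp\ref{b-outer}. It also asserts that \ref{b-outer}\imp\ref{b-qcont}, \ref{b-wqcont}\imp\ref{b-repr} and \ref{b-outer}\imp\ref{b-wqcont} all fail.

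\ref{b-qcont}\imp\ref{b-repr} is trivial: take $v=u$. For \ref{b-repr}\imp\ref{b-wqcont}, let $v$ be the quasicontinuous representative and $E_0=\{v\ne u\}$, so $\Cinfty(E_0)=0$. Given $\eps>0$, choose an open $G$ with $\Cinfty(G)<\eps$ and $v|_{\P\setm G}$ continuous. Countable subadditivity of $\Cinfty$ gives $\Cinfty(G\cup E_0)<\eps$, and $u=v$ on $\P\setm(G\cup E_0)$, so $u$ is weakly quasicontinuous.

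For \ref{b-qcont}\imp\ref{b-outer}, take $E$ with $\Cinfty(E)<c$ and a test function $u\ge 1$ on $E$ with $\|u\|_{\NinftyP}<c$. By \ref{b-qcont} there is an open $F$ with $\Cinfty(F)<\eps$ and $u|_{\P\setm F}$ continuous. Then the set $\{x\in\P\setm F: u(x)>1-\de\}$ is relatively open in $\P\setm F$. Hence $G:=\{u>1-\de\}\cup F$ is open and contains $E$. Since $u/(1-\de)$ is admissible on $\{u>1-\de\}$, subadditivity gives
\[
\Cinfty(G)\le \frac{c}{1-\de}+\eps.
\]
Letting $\de,\eps\to0$ yields the outer capacity property.

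For \ref{b-wqcont}\imp\ref{b-outer}, the same argument works except that the exceptional set $F$ is no longer open, and this is the step I expect to be the main obstacle. My first attempt is an iteration. Apply weak quasicontinuity to a test function $w_1$ for $F_1:=F$ with $\|w_1\|_{\NinftyP}<\eps/2$. This yields $F_2$ with $\Cinfty(F_2)<\eps/4$, off which $\{w_1>1-\de\}$ is relatively open. Continue in this way with $F_k$ and $w_k$, and take $G=\bigcup_k\{w_k>1-\de\}\cup\{u>1-\de\}$, where $w_0:=u$ and $F_0:=E$. To show that $G$ is open, I will check, for points of $F_k\setm F_{k+1}$, that they are interior through the level set of $w_k$. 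I will exploit the lattice structure of $L^\infty$: the norm of $\sup_k w_k$ is controlled by $\sup_k\|w_k\|$ rather than by a sum, which should keep the capacity bound under control. If this gets stuck, I will fall back on one of the equivalent formulations of outer capacity from \cite{BBLMaly}.

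For the negative parts I would give explicit examples. For \ref{b-outer}$\not\Rightarrow$\ref{b-wqcont} (and hence also \ref{b-outer}$\not\Rightarrow$\ref{b-qcont}), I take a countable space with point masses, so that $\Cinfty(E)=1$ for $E\ne\emptyset$ and outer capacity holds, but with an accumulation point. The functions in $\Ninfty(\P)=L^\infty(\P)$ are then arbitrary bounded functions, and discontinuity at the accumulation point cannot be removed on a set of capacity $<1$. For \ref{b-wqcont}$\not\Rightarrow$\ref{b-repr}, I take a space containing a nonempty set of zero measure and no curves, so that $\Cinfty$ of that set vanishes. Then every function is weakly quasicontinuous off capacity-zero sets, but no open small set exists. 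Finally, a union of the two constructions should make all four properties fail at once.
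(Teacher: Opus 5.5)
There is a genuine gap, exactly at the step you flagged. The implication \ref{b-wqcont}\imp\ref{b-outer} is not part of the statement, and it is false. In the diagram the upward double arrow from \ref{b-wqcont} to \ref{b-outer} is itself struck through, and beside it stands a struck-out downward arrow. So the right-hand column asserts that \ref{b-outer} and \ref{b-wqcont} are independent: \ref{b-outer}\negimp\ref{b-wqcont} \emph{and} \ref{b-wqcont}\negimp\ref{b-outer}. The only positive implications are \ref{b-qcont}\imp\ref{b-outer}, \ref{b-qcont}\imp\ref{b-repr} and \ref{b-repr}\imp\ref{b-wqcont}.

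Your own example for \ref{b-wqcont}\negimp\ref{b-repr} already refutes the implication you are trying to prove. Let $\P=\{0\}\cup\{2^{-n}:n\in\Zp\}$ and $\mu=\sum_{n=1}^\infty 2^{-n}\delta_{2^{-n}}$. As there are no curves, $\Cinfty(\{0\})=\|\chi_{\{0\}}\|_{L^\infty(\P)}=0$. Every $u\in\NinftyP$ is continuous on the discrete set $\P\setm\{0\}$, so \ref{b-wqcont} holds. But every open $G\ni 0$ contains some $2^{-n}$, so $\Cinfty(G)=1$ and $\Cinfty$ is not outer.

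Your iteration breaks visibly here. For small norms one is forced to have $F_k=\{0\}$ and $\{w_k>1-\delta\}=\{0\}$ for every $k$, so your $G=\{0\}$ is not open. Controlling the norm of $\sup_k w_k$ does nothing for openness. No equivalent formulation of outer capacity can rescue the step either, because upgrading a small-capacity exceptional set to a small-capacity open set is precisely the outer capacity property.

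Everything else in your plan is sound:
\begin{itemize}
\item the level-set argument for \ref{b-qcont}\imp\ref{b-outer};
\item the trivial \ref{b-qcont}\imp\ref{b-repr};
\item the subadditivity argument for \ref{b-repr}\imp\ref{b-wqcont};
\item the point-mass example ($\mu=\delta_0+\sum_n 2^{-n}\delta_{2^{-n}}$ with $u=\chi_{\{0\}}$) for \ref{b-outer}\negimp\ref{b-wqcont}, and hence \ref{b-outer}\negimp\ref{b-qcont}.
\end{itemize}
So delete the attempted proof of \ref{b-wqcont}\imp\ref{b-outer} and use the example above as the proof of \ref{b-wqcont}\negimp\ref{b-outer}.

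In that same example you should also name the function witnessing \ref{b-wqcont}\negimp\ref{b-repr}, e.g.\ $u=\chi_{\{2^{-2n}:n\in\Zp\}}$. The sets of zero capacity are exactly the subsets of $\{0\}$, and no choice of $u(0)$ makes $u$ continuous. Quasicontinuous functions are continuous by Proposition~\ref{prop-Linfty-gen}, so $u$ has no quasicontinuous representative.

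Finally, a disjoint union of the two spaces placed far apart makes all four properties fail. Property \ref{b-outer} fails because of the null point. Property \ref{b-wqcont} fails because of the characteristic function of the accumulation point carrying positive mass. Consequently \ref{b-qcont} and \ref{b-repr} fail as well.
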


\begin{prop} \label{prop-Linfty-gen}
\textup{(\cite[Proposition~6.1]{BBLMaly})}
The following are true for any metric measure space $\P$ 
satisfying the standing assumptions\/\textup{:}
\begin{enumerate}
\renewcommand{\theenumi}{\textup{(\roman{enumi})}}%
\item \label{r1}
$\Cinfty(G)=1$ for every nonempty open set $G$.
\item \label{r2}
Every quasicontinuous function is  continuous.
\item \label{r3} 
For each $E \subset \P$ either
$    \Cinfty(E)=0$ or $\Cinfty(E)=1$. 
\end{enumerate}
\end{prop}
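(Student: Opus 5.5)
The plan is to prove the three parts in the order \ref{r1}, \ref{r3}, \ref{r2}. The last is an immediate corollary of \ref{r1}. The real content is \ref{r3}, which needs a small lemma saying that sets of measure zero on which an $\Ninfty$ function is "too large" have zero capacity.

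For \ref{r1}, first note that $u\equiv1$ is admissible for $\Cinfty(G)$. It has the upper gradient $g\equiv0$, so $\Cinfty(G)\le \|1\|_{L^\infty}=1$. Conversely, let $G$ be open and nonempty and let $u\ge1$ on $G$. Since $G$ contains a ball, the standing assumptions give $\mu(G)>0$. Hence $\|u\|_{L^\infty}\ge1$, and so $\|u\|_{\NinftyP}\ge1$. Thus $\Cinfty(G)=1$.

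For \ref{r3}, the capacity is always at most $1$ (test with $u\equiv1$), so it suffices to show that $\Cinfty(E)<1$ implies $\Cinfty(E)=0$.
\begin{enumerate}
\item Take $u\ge1$ on $E$ with $\|u\|_{\NinftyP}<1$, and set $m=\|u\|_{L^\infty}<1$. Let $F=\{u>m\}$. Then $\mu(F)=0$ and $E\subset F$.
\item Let $g$ be an upper gradient of $u$ with $M:=\|g\|_{L^\infty}<\infty$. Let $N=\{g>M\}$, which is a null set. Replacing $g$ by $\min\{g,M\}+\infty\chi_N$ still gives an upper gradient. Indeed, curves with positive length in $N$ now have infinite integral, and on the remaining curves the integrals are unchanged.
\item The key claim is that $\rho=\infty\chi_{F\cup N}$ is an upper gradient of $\chi_F$. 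Since $\|\rho\|_{L^\infty}=0$ and $\|\chi_F\|_{L^\infty}=0$, this gives $\Cinfty(E)\le\Cinfty(F)\le\|\chi_F\|_{\NinftyP}=0$.
\item To verify the claim, consider a nonconstant rectifiable curve $\gamma$ whose endpoints give different values of $\chi_F$. If $\gamma$ spends positive length in $N$, then $\int_\gamma\rho\,ds=\infty$ and we are done. Otherwise $g\le M$ at a.e.\ arc-length point of $\gamma$. Applying the upper gradient inequality to subcurves shows that $u\circ\gamma$ is $M$-Lipschitz in arc length. One endpoint $x=\gamma(s_0)$ lies in $F$, so $u(x)>m$. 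By the Lipschitz bound, $u>m$ on the subarc of length $\min\{(u(x)-m)/M,\,l_\gamma\}$ starting at $s_0$ (any positive length if $M=0$). Hence $\gamma$ has positive length in $F$, and $\int_\gamma\rho\,ds=\infty$.
\end{enumerate}
I expect this curve-wise argument in steps 2--4 to be the main obstacle. The delicate points are that $g$ must be modified to be bounded \emph{everywhere} off a null set that is itself charged with $\infty$, and that the conventions for curves through points where $u=\pm\infty$ must be handled. For the latter, $u(x)=\infty$ still forces $u$ to be large near $x$ along $\gamma$ under the Lipschitz bound, since finite integrals rule out infinite jumps.

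For \ref{r2}, let $u$ be quasicontinuous and fix $0<\eps<1$. There is an open $G$ with $\Cinfty(G)<\eps<1$ such that $u|_{\P\setm G}$ is continuous. By \ref{r1}, $G=\emptyset$, so $u$ itself is continuous.
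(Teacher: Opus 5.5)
The note gives no proof of this proposition; it only quotes \cite[Proposition~6.1]{BBLMaly}. So there is no in-paper argument to compare against. Your proof is correct and self-contained. Parts (i) and (ii) are the natural arguments.

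For (iii), a shorter route, standard in Newtonian theory, is available. Set $v=\min\{u,m\}$. It has the same upper gradient as $u$, so $v\in\NinftyP$, and $v=u$ a.e. Then invoke the general fact that two Newtonian functions which agree a.e.\ agree outside a set of zero capacity. Since $E\subset\{u>m\}=\{u\ne v\}$, this gives $\Cinfty(E)=0$ at once.

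Your steps~2--4 are a hands-on proof of that general fact in this special case. On curves not charging the null set $N$, $u\circ\gamma$ is $M$-Lipschitz, so an endpoint lying in $F$ forces positive length in $F$. You trade a citation for a few lines of curve analysis, and nothing is lost.

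Two small technical points.

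First, upper gradients must be Borel, but $F=\{u>m\}$ is only $\mu$-measurable. You should therefore take $\rho=\infty\chi_B$, where $B\supset F\cup N$ is a Borel set of measure zero. This is where Borel regularity of $\mu$ enters, exactly as in the standard proof that $\Gamma_E^+$ is negligible when $\mu(E)=0$. Step~4 goes through unchanged, since $\gamma^{-1}(B)\supset\gamma^{-1}(F)\cup\gamma^{-1}(N)$.

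Second, two simplifications:
\begin{enumerate}
\item The modification of $g$ in step~2 is unnecessary. You only use that $g\le M$ off the Borel null set $N$, which already holds for $g$ itself.
\item The worry about infinite values resolves more simply than you suggest. On a curve not charging $N$, apply the upper gradient inequality to subcurves together with the convention $|(\pm\infty)-(\pm\infty)|=\infty$. This shows $u\circ\gamma$ takes no infinite values at all, so $u(x)=\infty$ never occurs there.
\end{enumerate}
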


\section{The Vitali--Carath\'eodory property}

\begin{deff}   \label{def-VC}
$L^\infty(\P)$ 
has the \emph{Vitali--Carath\'eodory property} 
if
\begin{equation*} 
	\| u \|_{L^\infty(\P)} = \inf \{ \|v\|_{L^\infty(\P)}: 
   v\ge |u| \mbox{  and $v$ is lower semicontinuous on $\P$}\}
\end{equation*}
for every measurable function $u: \P \to \eR$.
\end{deff}

\begin{proof}[Proof of Proposition~\ref{prop-VC}]
If $\mu(\{x\})=0$, then $\|\chi_{\{x\}}\|_{L^\infty(\P)} =0$.
Let $v\ge \chi_{\{x\}}$ be a lower semicontinuous function.
Then $v>\tfrac12$  in an open set $G\ni x$ (which has positive measure) and hence 
$\|v\|_{L^\infty(\P)}> \tfrac12$.
This shows that the Vitali--Carath\'eodory property fails.

Conversely, assume that $\mu(\{x\})>0$ for every $x\in\P$ and $u\in L^\infty(\P)$.
Then, for every $u\in  L^\infty(\P)$, the constant function
\[
v \equiv \|u\|_{L^\infty(\P)} = \sup_{x\in \P} |u(x)|
\]
is a lower semicontinuous 
majorant of $|u|$ with $\|v\|_{L^\infty(\P)} = \|u\|_{L^\infty(\P)}$.
\end{proof}

\begin{proof}[Proof of Proposition~\ref{prop-VC-no-curve}]
As there are 
$L^\infty$-almost no nonconstant rectifiable curves,
$\Cinfty(E)=\|\chi_E\|_{L^\infty(\P)}$.
That \ref{c-outer}--\ref{c-mu} are equivalent
then follows from Proposition~\ref{prop-Linfty-gen}.

The equivalence \ref{c-VC}\eqv\ref{c-mu}
follows directly from Proposition~\ref{prop-VC}.
\end{proof}

\begin{proof}[Proof of Theorem~\ref{thm-main}]
By Proposition~\ref{prop-VC}, $\P$ is at most  countable
and thus contains no nonconstant curves.
It then follows from Proposition~\ref{prop-VC-no-curve}
that $\Cinfty$ is an outer capacity,
and moreover that 
$\Cinfty(E)=1$ for every nonempty set $E$.
Hence any weakly quasicontinuous continuous function is continuous
and thus quasicontinuous, i.e.\ 
\ref{b-wqcont}\imp\ref{b-qcont}.
The implications
\ref{b-qcont}\imp\ref{b-repr}\imp\ref{b-wqcont} follow
from Theorem~\ref{thm-qcont-char-Linfty}.

Example~\ref{ex-2^{-n}-VC} shows that 
\ref{b-qcont}, \ref{b-repr} and~\ref{b-wqcont} can fail.
On the other hand, \ref{b-qcont}, \ref{b-repr} and~\ref{b-wqcont}
trivially hold when $\P$ is a finite or discrete  set.
\end{proof}

\begin{remark} \label{rmk-1.4}
All counterexamples in~\cite{BBLMaly}, showing 
the negated implications, fail the Vitali--Carath\'eodory property.
The new Example~\ref{ex-2^{-n}-VC}
shows that 
\ref{b-outer}\negimp\ref{b-qcont} and
\ref{b-outer}\negimp\ref{b-wqcont} 
for $\NinftyP$
even when the Vitali--Ca\-ra\-th\'eo\-do\-ry property for $L^\infty(\P)$ holds.
\end{remark}

\begin{example} \label{ex-2^{-n}-VC}
(This example is a slight modification of
\cite[Example~6.4]{BBLMaly}, 
but with significantly new conclusions.
The difference is that $\P$ is equipped with 
the measure $\sum_{n=1}^\infty 2^{-n} \de_{2^{-n}}$
in \cite[Example~6.4]{BBLMaly}, and therefore the Vitali--Carath\'eodory property
is not satisfied.)

Let 
\[ 
   \P=\{0,2^{-n}:n \in \Zp\}
\quad \text{and} \quad
\mu=\de_0 + \sum_{n=1}^\infty 2^{-n}
\de_{2^{-n}},
\]
where $\de_x$ is the Dirac measure at $x$.
Proposition~\ref{prop-VC-no-curve}
shows that the Vitali--Carath\'eodory property holds
for $L^\infty(\P)$ and that $\Cinfty$ is an outer capacity,
i.e.\ \ref{b-outer} holds.

Moreover, $\chi_{\{0\}} \in \NinftyP$ is not weakly quasicontinuous
and does not have a quasicontinuous representative,
i.e.\ \ref{b-qcont}, \ref{b-repr} and~\ref{b-wqcont} fail.
The function $\chi_{\{0\}}$ also
shows that continuous functions are not dense in $\Ninfty(\P)$.
\end{example}

\section{Local connectedness}

We recall the classical definition.

\begin{deff} \label{deff-lc}
$\P$ is \emph{locally connected at $x\in \P$}
if for every $r>0$ there is a connected neighbourhood
(not necessarily open) $G \subset B(x,r)$ of $x$.

$\P$ is \emph{locally connected} if it is locally connected at
every point $x \in \P$.
\end{deff}

(Connected) components are always closed.
If $\P$ is locally connected then every 
component is open.
This is never the case when $\P$ is not locally connected.

\begin{prop} \label{prop-not-loc-conn}
Assume that $\P$ is not locally connected.

Then there is a non-continuous function $u \in \NinftyP$,
and moreover $u$ is not even quasicontinuous.
\end{prop}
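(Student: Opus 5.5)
Since $\P$ is not locally connected, the remark before the proposition gives a component $K$ of $\P$ that is not open. Take $u=\chi_K$. The plan has three steps: show $u \in \NinftyP$ with zero upper gradient, show $u$ is not continuous, and conclude via Proposition~\ref{prop-Linfty-gen}.

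\emph{Step 1: $u\in\NinftyP$.} Every nonconstant rectifiable curve $\gamma:[0,l_\gamma]\to\P$ has connected image $\gamma([0,l_\gamma])$. This image therefore lies entirely in one component, so either it is contained in $K$ or it is disjoint from $K$. In both cases $u(\gamma(0))=u(\gamma(l_\gamma))$. Hence $g\equiv0$ is an upper gradient of $u$. Also $u$ is Borel, since $K$ is closed, and $|u|\le1$. Thus $\|u\|_{\NinftyP}\le 1<\infty$.

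\emph{Step 2: $u$ is not continuous.} If $u$ were continuous, then $K=u^{-1}((\tfrac12,\infty))$ would be open, contradicting the choice of $K$. Since components are always closed, the discontinuity occurs at some point $x\in K$ that is a limit of points outside $K$.

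\emph{Step 3: $u$ is not quasicontinuous.} By Proposition~\ref{prop-Linfty-gen}\ref{r2}, every quasicontinuous function is continuous. Since $u$ is not continuous by Step~2, it is not quasicontinuous.

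The main obstacle is the justification that some component is not open when $\P$ fails to be locally connected, since the paper states this without proof. If this is needed, the plan is as follows. Suppose every component of every open ball were open; then $\P$ would be locally connected, using connected-component neighbourhoods inside balls. So failure of local connectedness at $x$ only directly yields components of a ball $B(x,r)$ that are not open, not components of $\P$ itself. Then $u$ would have to be built relative to a ball.

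The statement "this is never the case" is the classical fact that one may rely on: in a non-locally connected space some component of some open set is not open. One can apply the argument above to a component $K$ of an open ball $B$, with $u=\chi_K$ multiplied by a suitable cut-off. This needs care, because curves can leave $B$.

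A cleaner route is to use the paper's remark directly as stated and take $K$ a non-open component of $\P$. Since the remark is stated earlier in the paper, it may be assumed.
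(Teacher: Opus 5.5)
There is a genuine gap. Your Steps~1--3 are correct once a non-open component $K$ of $\P$ is available, but the hypothesis does not supply one. The topologist's comb (or the closed topologist's sine curve), with any measure satisfying the standing assumptions, is connected and not locally connected. Its only component is $K=\P$, which is open, and $\chi_K\equiv 1$ is continuous. So the remark you decide to ``assume'' is false if read as a statement about components of $\P$ itself. The correct classical fact is the one you found in your own fallback paragraph: some component of some ball fails to be open. Your ``cleaner route'' therefore only covers the special case where some $x$ lies in the closure of $\P\setm\P_x$, with $\P_x$ the component of $x$. The general case is left at an unspecified ``suitable cut-off''.

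That cut-off is the missing idea, and it is essentially the whole content of the paper's proof.

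Choose $x_0$ and $r$ such that $B=B(x_0,r)$ contains no connected neighbourhood of $x_0$. Let $E$ be the component of $B$ containing $x_0$; it is relatively closed in $B$, hence Borel. Set $u=(1-d(\cdot,x_0)/r)\chi_E$.

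Then $g\equiv 1/r$ is an upper gradient:
\begin{enumerate}
\item If both endpoints of $\gamma$ lie in $E$, use that $u\vert_E$ is $1/r$-Lipschitz.
\item If both endpoints lie outside $E$, there is nothing to prove.
\item If $\gamma(0)\in E$ and $\gamma(l_\gamma)\notin E$, then $\gamma$ cannot stay inside $B$, since a connected subset of $B$ meeting $E$ lies in $E$. So $\gamma$ passes through some $y$ with $d(y,x_0)\ge r$, whence $|u(\gamma(0))-u(\gamma(l_\gamma))| = 1-d(\gamma(0),x_0)/r \le d(\gamma(0),y)/r \le \int_\gamma g\,ds$.
\end{enumerate}

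Finally, every neighbourhood of $x_0$ meets $B\setm E$, where $u=0$; otherwise $E$ would be a connected neighbourhood of $x_0$ inside $B$. Since $u(x_0)=1$, $u$ is discontinuous at $x_0$, and your Step~3 then finishes the argument.
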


\begin{proof}
We can find $x_0$ and $r$ such that $B:=B(x_0,r)$ does not
contain any connected (not necessarily open) neighbourhood of $x_0$.
Let $E$ be the component of $B$ containing $x_0$.
Then $E$ is relatively closed in $B$ and thus measurable.
By the choice of $r$, we see that 
$x_0 \in \bdy (B \setm E)$.
Let 
\[
    u(x)=\begin{cases}
             1-d(x,x_0)/r, & \text{if } x \in E, \\
             0, & \text{otherwise}.
      \end{cases}
\]
As there are no curves between $E$ and $B \setm E$,
we see that $g \equiv 1/r$ is an
upper gradient of $u$ in $\P$, and thus $u \in \NinftyP$.
Clearly, $u$ is not continuous at $x_0$.
By Proposition~\ref{prop-Linfty-gen}\ref{r2}, $u$ is not even 
quasicontinuous either.
\end{proof}

\begin{remark}
If $\P$ is disconnected, 
then there are two cases:
\begin{enumerate}
\item
The components of $\P$ are ``well separated''  in the following sense:
If 
$x \in \P$ and $\P_x$ is the component containing  $x$, 
then $x \notin \overline{\P \setm \P_x}$.
In this case,  the components are open 
and (quasi)continuity
for functions in $\NinftyP$ can be studied in each component separately.
\item 
There is
$x \in \P$ such that $x \in \overline{\P \setm \P_x}$, where 
$\P_x$ is the component of $x$.
In this case,
$\P$ is not locally connected at $x$.
Hence, by Proposition~\ref{prop-not-loc-conn},
there is a non-continuous function $u \in \NinftyP$
which is not even quasicontinuous.
\end{enumerate} 
\end{remark}

\end{document}